\documentclass[11pt]{article}


\usepackage{amsmath}          
\usepackage{amssymb}  
\usepackage{amsmath,amsthm} 
\usepackage{enumerate}
\usepackage{stmaryrd}
\usepackage{url} 
\usepackage{mathrsfs}       

\usepackage[all]{xy}

\usepackage{scalerel}   


 \DeclareMathOperator{\image}{Im}

\newcommand{\bb}[1]{\mathbb{#1}}
   \newcommand{\A}{\bb{A}}
   \newcommand{\B}{\bb{B}}
   \newcommand{\LL}{\bb{L}}  
   \newcommand{\M}{\bb{M}}    
   \newcommand{\C}{\bb{C}} 
   
 \newcommand{\f}{\mathbf{f}}  
   
\newcommand{\cS}{\mathcal{S}}


\newcommand{\CC}{\mathscr{C}}
\newcommand{\sL}{\mathscr{L}}
\newcommand{\V}{\mathscr{V}}

\newcommand{\al}{\alpha}

\newcommand{\lam}{\lambda}
\newcommand{\Lam}{\Lambda}
\newcommand{\om}{\omega}
\newcommand{\Om}{\Omega}
\newcommand{\ph}{\varphi}
\newcommand{\sg}{\sigma}   
\newcommand{\Sig}{\Sigma}   
\newcommand{\thet}{\theta}     
\newcommand{\Ups}{\Upsilon}    


\newcommand{\br}[1]{[\![#1]\!]}  

\def\bo{\mathop{\scalerel*{\Box}{X}}\kern-1pt}
\def\di{\mathord{\scalerel*{\Diamond}{gX}}}

\newcommand{\epi}{\twoheadrightarrow}

\newcommand{\join}{\bigvee}
\renewcommand{\le}{\leqslant}  
\newcommand{\lo}{^{\triangledown}}  
\newcommand{\meet}{\bigwedge}
\newcommand{\mono}{\rightarrowtail}
\newcommand{\ov}[1]{\overline{#1}}
        
\newcommand{\sub}{\subseteq}

\newcommand{\up}{^{\vartriangle}}

\newcommand{\vc}[2]{#1_0,\ldots{},#1_{#2-1}}
  
\newcommand{\<}{\langle}  
\renewcommand{\>}{\rangle}  

\makeatletter
\def\blfootnote{\xdef\@thefnmark{}\@footnotetext}
\makeatother


\numberwithin{equation}{section}         


\theoremstyle{plain}
\newtheorem{theorem}{Theorem}[section]
\newtheorem{lemma}[theorem]{Lemma}

\theoremstyle{definition}

\newtheorem{example}[theorem]{Example}



\begin{document}


\title{Definable operators on stable set lattices}
\author{Robert Goldblatt
\\
Victoria University of Wellington}
\date{}    
\maketitle

\blfootnote{\emph{2010 Mathematics Subject Classification}: 03G10, 06B23, 03C20, 06A15, 06D50.

\emph{ Key words and phrases}: lattice expansion, operator, canonical extension, completion, MacNeille completion, polarity, stable set, first-order definable, ultraproduct, variety, duality.
}

\begin{abstract}
A fundamental result from Boolean modal logic states that a first-order definable class of Kripke frames defines a logic that is validated by all of its canonical frames. We generalise this to the level of non-distributive logics that have a relational semantics provided by structures based on polarities. Such structures have  associated complete lattices of stable subsets, and these have been used to construct canonical extensions of lattice-based algebras. We study classes of structures that are closed under ultraproducts and whose stable set lattices have additional operators that are first-order definable in the underlying structure. We show that such classes generate varieties of algebras that are closed under canonical extensions. The proof makes use of a relationship between canonical extensions and MacNeille completions.
\end{abstract}



\section{Introduction}\label{secintro}

A significant theorem of Fine \cite{fine:conn75} states that any normal modal logic that is characterised by a first-order definable class of Kripke frames must be valid in all its canonical frames. This was an important contribution to
clarifying the relationship between first-order logic and modal logic under Kripke semantics. The theorem was
generalised in \cite{gold:vari89} to  a result about the closure under canonical extensions of certain varieties (i.e.\ equationally definable classes) of Boolean algebras with operators. Here we generalise it further to varieties of non-distributive lattices with operators.
In so doing we preserve a core feature of Kripke semantics, namely that it interprets propositional formulas as first-order definable operations on subsets of a relational structure.

We work with the notion of a \emph{polarity}  $P=(X,Y,R)$ as consisting of a binary relation $R$ from a set $X$ to a set $Y$. In the same sort of way that Kripke frames have been used to model  Boolean modal logics,  polarities have been used to provide a relational semantics for various non-distributive substructural logics.
These include the implication-fusion fragments of relevant logic, BCK logic and others \cite{dunn:cano05,gehr:gene06}; the full Lambek-Grishin calculus \cite{cher:gene12} and linear logic \cite{coum:rela14}; and logics with unary modalities
 \cite{conr:cate16,conr:algo16}. Algebraically these systems are modelled by (typically non-distributive) \emph{lattice expansions}, i.e.\ lattices with additional operations.
 
Just as a Kripke frame has a modal algebra of all subsets of the frame, each polarity $P$ has an associated complete lattice $P^+$ whose members are certain \emph{stable} subsets of $X$. We call $P^+$ the \emph{stable set lattice} of $P$. In the converse direction, any lattice can be embedded into the stable set lattice of some polarity. That construction has been applied by Gehrke and Harding \cite{gehr:boun01} to develop a notion of {canonical extension} of any  lattice-based algebra. 

Canonical extensions were first introduced by J{\'o}nsson and Tarski \cite{jons:bool51} for Boolean algebras with \emph{operators} (join-preserving operations), and they play a significant role in the meta-theory of modal logics. They are closely connected with the notion of canonical frame: the  algebra of all subsets of a canonical frame of a modal logic is the canonical extension of its associated Lindenbaum algebra.
 Canonical extensions are involved in explaining the structural relationships underlying Fine's theorem.  The property of a logic being validated by its canonical frames was generalised in \cite{gold:vari89} to that of a variety of Boolean algebras with operators being closed under canonical extensions. Then the theorem from \cite{fine:conn75} was generalised to the result that if a class $\cS$ of relational structures is closed under ultraproducts, then the subset algebras of the members of $\cS$ generate a variety of Boolean algebras with operators
 that is closed under canonical extensions (see \cite{gold:fine20} for more on the background and significance of this theorem).
 
 The present paper continues a programme of lifting results like this from the modal setting to the context of polarities and lattice-based algebras. It follows on from \cite{gold:cano18}, where the concept of a \emph{canonicity framework} was introduced as an axiomatic formulation of a set of relationships between a class $\CC$ of abstract algebras and a class 
 $\Sig$ of ``structures''. It was shown that within any canonicity framework, the generalisation of Fine's theorem can be derived: each ultraproducts-closed subclass of $\Sig$ gives rise to a subvariety of $\CC$ that is closed under canonical extensions. Moreover it was shown that the axioms of a canonicity framework are fulfilled by taking $\CC$ to be the class of bounded lattices and $\Sig$ to be the class of all polarities.
 
 Here we will extend this analysis by building canonicity frameworks for which $\CC$ is a variety of lattice expansions whose additional operations are operators (join preserving) or dual operators (meet preserving). The key  idea is that of a \emph{first-order definable} operation on a stable set lattice, an idea that goes to the heart of Kripke's  semantical interpretation of the modalities $\bo$ and $\di$. On the algebra of subsets of a Kripke frame $(X,R)$, the modal connectives can be interpreted as  operations assigning to each set $A\sub X$ the sets
 $$
 \bo A = \{x: \forall y (xRy\to y\in A)\}\enspace \text{and}\enspace \di A = \{x: \exists y (xRy\ \& \ y\in A) \}.
$$
The expressions defining the members of these sets can be seen as first-order formulas in the binary predicate $xRy$ and the unary predicate $y\in A$,  leading to the `standard translation' of the propositional modal language into a first-order language \cite[\S 2.4]{blac:moda01}. This ability to relate modal logic to a fragment of first-order logic does much to account for the success of the relational semantics revolution.

We will give a formal account of what it is for a stable set lattice to be closed under an operation defined by a first-order formula. Then for a given class $\Sig$ of similar polarity-based structures and a set $\Phi$ of formulas,  $\Sig_\Phi$ is defined to be the class of those members of  $\Sig$ whose stable set lattices are closed under the operations defined by the members of $\Phi$. When these definable operations are completely join preserving or completely meet preserving, a canonicity framework can be constructed with $\Sig_\Phi$ as one of its ingredients. The outcome is that the generalisation of Fine's theorem holds  for all varieties of lattice-based algebras generated by ultraproducts-closed subclasses of $\Sig_\Phi$.

Verifying the framework axioms involves establishing  properties of ultraproducts of stable set lattices and of polarity structures. A critical property is that the canonical extension  $(P^+)^\sg$ of the stable set lattice $P^+$ of a polarity structure $P$ can be embedded into the stable set lattice $(P^U)^+$ of some ultrapower $P^U$ of $P$. We prove this by showing that 
  $(P^U)^+$ is a MacNeille completion of $(P^+)^\sg$ and  invoking a result of \cite{gehr:macn06} about the relationship between  MacNeille completions and canonical extensions. 
  
In the next section we review some basic theory about these two kinds of completion of a lattice expansion. In Section 3 we discuss polarities and their stable set lattices, and develop the notion of a definable operation on such a lattice, providing examples of this notion involving residuated lattices and modal operators. Section 4 is about ultraproducts of polarities and stable lattices, and proves the fundamental properties that are needed in Section 5, where we establish the existence of canonicity frameworks and obtain our main theorem generalising Fine's canonicity theorem to lattice expansions.

\section{Complete extensions}\label{sec1}  

This section reviews the  notions of canonical extension and MacNeille completion of a lattice-based algebra.   
We deal throughout the paper with bounded lattices, and view them as algebras of the form $(\LL,\land,\lor,0,1)$, with binary operations of meet $\land$ and join $\lor$, least element 0 and greatest element 1.  The partial order of a lattice is denoted  $\le$, and the symbols $\join$ and $\meet$ are used for the join and meet of a set of elements, when these exist. Lattice homomorphisms are assumed to preserve 0 and 1. A surjective homomorphism (\emph{epi}morphism) may be denoted by $\epi$, and an injective one (\emph{mono}morphism) by  $\mono$. The notation $f[S]$ will be used for the image $\{fa:a\in S\}$ of a set $S$ under function $f$.

A function $\thet\colon\LL\to\M$ between lattices is called \emph{isotone} if it is order preserving: $a\le b$ implies $\thet a\le \thet b$. It is   \emph{antitone} if it is order reversing: $a\le b$ implies $\thet b\le\thet a$. It is a \emph{lattice embedding}  if it is a monomorphism of bounded lattices. A lattice embedding is always an \emph{order embedding}, i.e.\ has 
$a\le b$ iff $\thet a\le \thet b$.  A function of one or more coordinates is called \emph{monotone} if in each coordinate it is isotone or antitone.

A finitary operation $f\colon \LL^n\to\LL$ on a lattice is an \emph{operator} if it preserves binary joins in each coordinate. A \emph{normal operator} preserves the least element in each coordinate as well, hence preserves all  finite joins in each coordinate, including the empty join 0. A \emph{complete operator} preserves all existing non-empty joins in each coordinate, while a \emph{complete normal operator} preserves the empty join as well. By iterating the join preservation in each coordinate one can show that if $f$ is a  complete normal operator, then
\begin{equation}\textstyle   \label{joincomplete}
f(\join A_0,\dots,\join A_{n-1})=\join\{f(\vc{a}{n}):a_i\in A_i \text{ for all }i<n\}.
\end{equation}
A \emph{dual operator} (\emph{normal} dual operator, \emph{complete} dual operator, \emph{complete normal} dual operator) is a finitary operation that preserves binary meets (finite meets, non-empty meets, all meets) in each coordinate. Preservation of the empty meet means preservation of the greatest element $1$.

A \emph{completion} of lattice $\LL$ is a pair $(\thet,\C)$ with $\C$ a complete lattice and $\thet\colon\LL\mono\C$  a lattice embedding.
An element of $\C$ is called \emph{closed} if it is a meet of elements from the image $\thet[\LL]$ of $\LL$, and \emph{open} if it is a join of elements from  $\thet[\LL]$. The  set of closed elements of the completion is denoted $K(\C)$, 
and the set of open elements is denoted $O(\C)$.

A completion $(\thet,\C)$ of $\LL$ is \emph{dense} if $K(\C)$ is join-dense and $O(\C)$ is meet-dense in $\C$, i.e.\ if  every member of $\C$ is both a join of closed elements and a meet of open elements. 
A completion is \emph{compact} if for any set $S$ of closed elements and any set $T$ of open elements such that $\meet S\le\join T$,  there are finite sets $S'\sub S$ and $T'\sub T$ with $\meet S'\le\join T'$.

A \emph{canonical extension} of bounded lattice $\LL$ is a completion $(\thet,\LL^\sg)$  of $\LL$ which is dense and compact. It is shown in \cite{gehr:boun01} that a dense and compact completion exists for any $\LL$, and that any two such completions are isomorphic by a unique isomorphism commuting with the embeddings of $\LL$.  This justifies talk of ``the'' canonical extension.

A function $f\colon\LL\to \M$ between lattices can be lifted it to a function $\LL^\sg\to\M^\sg$ between their canonical extensions in two ways, using the embeddings $\thet_\LL\colon \LL\mono\LL^\sg$ and  $\thet_\M\colon \M\mono\M^\sg$ to form the \emph{lower} canonical extension $f\lo$ and \emph{upper} canonical extension $f\up$ of $f$: see 
\cite[Definition 4.1]{gehr:boun01} where these functions are denoted $f^\sg$ and $f^\pi$ respectively. For \emph{isotone} $f$ they can be defined  for all  $x\in\LL^\sg$ as follows \cite[Lemma 4.3]{gehr:boun01}:
\begin{align*}
f\lo x &=\join\{ \meet \{\thet_\M(fa) :a\in\LL\text{ and }p\leq \thet_\LL(a)\}:x\geq p\in K(\LL^\sg) \},
\\
f\up x &=\meet\{ \join \{\thet_\M(fa) :a\in\LL\text{ and }q\geq \thet_\LL(a)\} :x\leq q\in O(\LL^\sg)\}.
\end{align*}
The maps $f\lo$ and $f\up$  have $f\lo x\leq f\up x$. They both extend $f$ in the sense that the diagram
$$
\newdir{ >}{{}*!/-8pt/@{>}}
\xymatrix{
\LL  \ar@{ >->}[d]_{\thet_\LL} \ar[r]^f  &\M \ar@{ >->}[d]^{\thet_\M} 
\\
{\LL^\sg} \ar[r]^{g} &{\M^\sg}   }
$$
commutes when $g= f\lo$ or $g= f\up$.

If $f\colon\LL^n\to\LL$ is an $n$-ary operation on $\LL$, then $f\lo$ and $f\up$ are maps from $(\LL^n)^\sg$ to $\LL^\sg$. But $(\LL^n)^\sg$ can be identified with $(\LL^\sg)^n$, since the natural embedding $\LL^n\to(\LL^\sg)^n$ is dense and compact, so this allows $f\lo$ and $f\up$ to be regarded as an $n$-ary operations on $\LL^\sg$. If $f$ is a (normal) operator, then $f\lo$ is a complete (normal) operator, and if $f$ is a (normal) dual operator, then $f\up$ is a complete (normal) dual operator \cite[Section 4]{gehr:boun01}.

A \emph{lattice expansion} (or \emph{lattice-based algebra}) is an algebra of the form
$$
\LL=(\LL_0,\{\f^\LL:\f\in\Om\}),
$$
where $\LL_0$ is a bounded lattice, $\Om$ is a set of finitary operation symbols with given arities,
and for $n$-ary $\f$, $\f^\LL$ is an $n$-ary operation on  $\LL_0$. We also call such an $\LL$ an \emph{$\Om$-lattice}.
We will take $\Om$ to be presented as the union $\Lam\cup\Ups$ of disjoint subsets $\Lam$ and $\Ups$  (`lower' and `upper' symbols, respectively). For any $\Om$-lattice $\LL$, 
define a canonical extension for $\LL$ by putting
\begin{equation}  \label{Lsigom}
\LL^\sg = (\LL_0^\sg, \{(\f^\LL)\lo:\f\in\Lam\}\cup\{(\f^\LL)\up:\f\in\Ups\}).   
\end{equation}

A \emph{MacNeille completion} of a lattice $\LL$ is a completion $\thet\colon \LL\mono\ov\LL$ of $\LL$ such that $\thet[\LL]$ is both meet-dense and join-dense in the complete lattice $\ov\LL$, i.e.\  every member of $\ov\LL$ is both a meet  of  elements of $\thet[\LL]$ and a join of  elements of $\thet[\LL]$. Every lattice has a MacNeille completion, and any two such completions are isomorphic by a unique isomorphism commuting with the embeddings of $\LL$ (see e.g.\ \cite{dave:intr90}).

For an isotone function $f\colon\LL\to\M$ between lattices,  define two functions $\ov f,\widehat{f}\colon \ov\LL\to\ov\M$ by working with the embeddings $\thet_\LL\colon \LL\mono\ov\LL$ and  $\thet_\M\colon \M\mono\ov\M$ to put
\begin{equation}     
\begin{split}
\ov f(x) &= \join\{ \thet_\M(f(a)):a\in\LL\ \&\ \thet_\LL(a)\le x\},    \label{lowerMc}
\\
\widehat f(x) &= \meet\{ \thet_\M(f(a)):a\in\LL\ \&\ x\le \thet_\LL(a)\}.   
\end{split}
\end{equation}
$\ov f$ and $\widehat f$ are the \emph{lower} and \emph{upper} MacNeille extensions of $f$, respectively. 

An isotone $n$-ary  $f\colon \LL^n\to\LL$ thus has two extensions to $\ov{\LL^n}$, and the latter can be identified with
$(\,\ov\LL\,)^n$ because the embedding $\LL^n\mono (\,\ov\LL\,)^n$ is a MacNeille completion of $\LL^n$ \cite[Prop.~2.5]{theu:macn07}. So 
$\ov f$ and $\widehat f$ can be regarded as $n$-ary operations on $\ov\LL$. Thus we can define a MacNeille completion $\ov\LL$ of an $\Om$-lattice $\LL$ by putting
\begin{equation}  \label{Lovom}
\ov\LL = \big(\ov{\LL_0}, \big\{\ov{\f^\LL}:\f\in\Lam\big\}\cup\big\{\widehat{\f^\LL}:\f\in\Ups\big\}\big).
\end{equation}

A significant relationship between canonical extensions and MacNeille completions was established in \cite{gehr:macn06} for lattice expansions that are monotone. Whereas Fine \cite{fine:conn75} proved that a sufficiently saturated
model of a modal logic could be mapped onto a canonical frame for the logic,  \cite{gehr:macn06} worked dually with saturated extensions of algebras, showing that any monotone lattice expansion $\LL$ has an extension $\LL^*$ such that the canonical extension $\LL^\sg$ of $\LL$ is embeddable into any MacNeille completion $\ov{\LL^*}$ of $\LL^*$ by an \emph{$\Om$-monomorphism}, i.e.\ a lattice monomorphism preserving the operations indexed by $\Om$. (In fact the constructed embedding also preserves all existing joins and meets.) An extension $\LL^*$ having the required saturation can be obtained as an ultrapower $\LL^U$ of $\LL$ modulo some ultrafilter $U$, using the theory of saturation of ultrapowers \cite[\S 6.1]{chan:mode73}. Thus  \cite[Theorem 3.5]{gehr:macn06} yields the following fact.

\begin{theorem}  \label{LLsigmaembed}
For any monotone\/ $\Om$-lattice $\LL$ there exists an ultrafilter $U$ and an\/ $\Om$-monomorphism 
$\LL^\sg\rightarrowtail \ov{\LL^U}$
from the canonical extension of\/ $\LL$ into the MacNeille completion  of the ultrapower $\LL^U$.
\qed
\end{theorem}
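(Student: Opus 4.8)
The plan is to derive this as a direct consequence of \cite[Theorem 3.5]{gehr:macn06}, whose content is that for \emph{any} monotone $\Om$-lattice $\LL$ and any elementary extension $\LL^*$ of $\LL$ (in the first-order signature of bounded lattices augmented by $\Om$) that is saturated to a sufficient degree, there is an $\Om$-monomorphism $\LL^\sg\mono\ov{\LL^*}$, one that moreover preserves all existing joins and meets. Granting this, the only thing left to supply is a single extension $\LL^*$ of $\LL$ that is simultaneously elementary and saturated to the required degree, and the claim is that an ultrapower does the job.

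Accordingly, the first step is to fix a cardinal $\ka$ exceeding the saturation bound demanded by \cite[Theorem 3.5]{gehr:macn06} --- this bound is controlled by the number of lattice filters and ideals of $\LL$, hence by a cardinal depending only on $|\LL|$ --- and to choose a countably incomplete, $\ka$-good ultrafilter $U$ over a suitable index set. By the standard saturation theory of ultrapowers \cite[\S6.1]{chan:mode73}, the ultrapower $\LL^U$ is then $\ka$-saturated. By \L o\'s's theorem the diagonal map $\LL\mono\LL^U$ is an elementary embedding; and since the bounded-lattice axioms and the monotonicity conditions on the operations $\f^\LL$ (for $\f\in\Om$) are all expressible by first-order sentences, they transfer to $\LL^U$. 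Thus $\LL^U$ is again a monotone $\Om$-lattice of the same signature, and its MacNeille completion $\ov{\LL^U}$ carries the extended operations exactly as prescribed in \eqref{Lovom}. The second step is then simply to instantiate \cite[Theorem 3.5]{gehr:macn06} with $\LL^*=\LL^U$: the saturation hypothesis is met by the choice of $U$, and we obtain the desired $\Om$-monomorphism $\LL^\sg\mono\ov{\LL^U}$, producing both the ultrafilter $U$ and the embedding asserted by the theorem.

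I expect the genuine difficulty to reside inside \cite[Theorem 3.5]{gehr:macn06} rather than in the present reduction. There the point is that saturation turns the abstract \emph{compactness} of the canonical extension into concrete bounds realized within $\LL^*$ itself: each lattice filter $F$ and ideal $I$ of $\LL$ acquires, via realized types, elements of $\LL^*$ bounding it below and above, and the delicate work is to assemble these realizers so that the meets and joins computed in $\ov{\LL^*}$ reproduce the order of $\LL^\sg$ --- in particular the incidence $p_F\le q_I$ between the closed element $p_F$ and open element $q_I$ determined by $F$ and $I$. \emph{Monotonicity} is precisely what guarantees that the MacNeille operator extensions $\ov{\f^\LL}$ and $\widehat{\f^\LL}$ of \eqref{Lovom} line up, along the embedding, with the lower and upper canonical extensions $(\f^\LL)\lo$ and $(\f^\LL)\up$; this is the standing hypothesis of \cite{gehr:macn06}, and it passes to $\LL^U$ by elementarity.

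For the present statement, by contrast, the only thing that genuinely requires verification is that some ultrapower attains the necessary saturation bound together with elementarity, and that is routine ultrapower theory; everything of substance has been packaged into the cited theorem. Hence the reduction above is, up to recording these standard facts about $\ka$-good ultrafilters, a complete argument.
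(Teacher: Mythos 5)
Your proposal is correct and follows essentially the same route as the paper: the paper likewise obtains the theorem by citing \cite[Theorem 3.5]{gehr:macn06} (an $\Om$-monomorphism of $\LL^\sg$ into the MacNeille completion of any sufficiently saturated extension $\LL^*$) and then taking $\LL^*$ to be an ultrapower $\LL^U$ whose saturation is guaranteed by the standard theory in \cite[\S 6.1]{chan:mode73}. Your additional remarks about choosing a countably incomplete $\ka$-good ultrafilter and transferring the monotone $\Om$-lattice axioms by \L o\'s's theorem simply make explicit the ``routine ultrapower theory'' that the paper leaves implicit.
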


\section{Definable operations over polarities}

A polarity $P=(X,Y,R)$ has $R\sub X\times Y$. The relation $R$ induces functions $\rho_R\colon\wp X\to\wp Y$ and $\lam_R\colon\wp Y\to\wp X$, where $\wp$ denotes powerset. Each
 set $A\sub X$ has the `right set'  $\rho_R A=\{y\in Y: \forall x\in A,xRy\}$, while each $B\sub Y$ has the `left set'
 $\lam_R B=\{x\in X: \forall y\in B,xRy\}$. The  functions $\rho_R$ and $\lam_R$ are inclusion-reversing and satisfy $A\sub\lam_R\rho_R A$ and $B\sub\rho_R\lam_R B$, i.e.\ they give a Galois connection between the posets $(\wp X,\sub)$ and $(\wp Y,\sub)$.
 A set $A\sub X$ is \emph{stable} if $\lam_R\rho_R A\sub A$ and hence $\lam_R\rho_R A=A$. A set $B\sub Y$ is \emph{stable} if $B=\rho_R\lam_R B$.
 Since in fact every $B\sub Y$ has $\lam_R\rho_R\lam_R B=\lam_R B$, the stable subsets of $X$ are precisely the sets $\lam_R B$ for all $B\sub Y$.  
 
$P^+$ is the set of all stable subsets of $X$ in $P$, ordered by set inclusion. It forms a complete bounded lattice in which 
$\meet G=\bigcap G$,
$\join G=\lam_R\rho_R\bigcup G$, $1=X$ and $0=\lam_R\rho_R \emptyset=\lam_R Y$. We call $P^+$ the \emph{stable set lattice} of $P$.
This construction  was used in \cite{gehr:boun01} to obtain a canonical extension of any lattice $\LL$  as the stable set lattice of the polarity for which $X$ is the set of filters of $\LL$, $Y$ is the set of ideals, and $xRy$ iff $x\cap y\ne\emptyset$. The embedding $\thet$ in this case has $\thet(a)=\{x\in X:a\in x\}$.

If $X=Y$ and $R$ is irreflexive and transitive, then the functions $\rho_R$ and $\lam_R$ are identical and provide an orthocomplementation making $P^+$ into an ortholattice \cite[Section 32]{birk:latt40}. In particular, if $R$ is the non-identity relation $\{(x,y):x\ne y\}$ on $X$, then  $\rho_R A=\lam_R A=$ the set complement $X-A$, all subsets are stable, and  $P^+$ is the  Boolean powerset algebra on $X$.

 We view any  polarity $P$ as a two-sorted structure for the first-order language of the signature $\sL=\{\ov X,\ov Y,\ov R\}$. Here $\ov X$ and $\ov Y$ are unary relation symbols interpreted as the sorts $X$ and $Y$ of $P$, while $\ov R$ is binary and interpreted as the relation $R$. We write $\sL$-formulas using a set $\{v_n:n<\omega\}$ of individual variables ranging over $X\cup Y$. For instance, any polarity is a model of the sentences
\begin{equation}  \label{defpolar}
\forall v_0(\ov X(v_0)\lor\ov Y(v_0)), \qquad \forall v_0\forall v_1(v_0\ov Rv_1\to\ov X(v_0)\land\ov Y(v_1)).
 \end{equation}
 
 We will form expansions of $\sL$  by adding various relation symbols denoting finitary relations on $X\cup Y$.
 For an illustration of first-order expressibility, consider a unary symbol $S$, typically interpreted as a subset of $X$. Define $\rho S(v_1)$ to be the formula $\forall v_0(S(v_0)\to v_0\ov Rv_1)$, and let $\lam\rho S(v_2)$ be $\forall v_1(\rho S(v_1)\to v_2\ov Rv_1)$. If 
  $\sL'=\sL\cup\{S\}$ and a polarity $P$ is expanded to an $\sL'$-structure $P'$ by interpreting $S$ as the set $A\sub X$, then the formula $\rho S$ defines $\rho_RA$ in $P'$, i.e.\ 
 $P'\models (\rho S)[y]$ iff $y\in\rho_RA$. Hence $\lam\rho S$ defines $\lam_R\rho_RA$.
 Thus if $\mathsf{stable}$-$S$ is the sentence $\forall v_2(\lam\rho S(v_2)\to S(v_2))$, then $\mathsf{stable}$-$S$ expresses stability of $A$, i.e.\ $P'\models\mathsf{stable}$-$S$ iff $A$ is stable.
 More generally, by replacing $S$ by any formula $\ph$ with a single free variable we can define a sentence 
 $\mathsf{stable}$-$\ph$ that is true in $P'$ iff the subset $\{x:P'\models\ph[x]\}$ of $X$ defined by $\ph$ is stable.

To develop a  notion of definable function over a polarity-based structure, fix some expansion $\sL^*$ of the signature $\sL$ for polarities.
 Let $\sL_\om^*=\sL^*\cup\{S_m:m<\om\}$, where each $S_m$ is a unary relation symbol, and for each $n<\om$ let
 $\sL_n^*=\sL^*\cup\{S_m:m<n\}$.
 If $\ph$ is a first-order $\sL_n^*$-formula with one free variable, then for each $\sL^*$-structure $P$, the formula $\ph$ defines an $n$-ary function $f^P_\ph$ on subsets of $X$ by putting, for any $A_0,\dots,A_{n-1}\sub X$, 
\begin{equation}\label{Fdef}
f^P_\ph(A_0,\dots,A_{n-1})=\{x\in X:\<P,A_0,\dots,A_{n-1}\>\models \ph[x]\},
\end{equation}
which is the subset of $X$ defined by $\ph$ in  the $\sL_n^*$-expansion  $\<P,A_0,\dots,A_{n-1}\>$ of $P$ in which each $S_m$ is interpreted as $A_m$.  

Now fix a set $\Om$ of operation symbols. 
Let $\Phi=\{\ph_\f:\f\in\Om\}$ be a set of $\sL^*_\om$-formulas indexed by $\Om$, with each $\ph_\f$ having one free variable.  Then for any class $\Sig$ of $\sL^*$-structures we define $\Sig_\Phi$ to be the class of all those $P\in\Sig$ for which the lattice $P^+$ is closed under the function $f^P_\ph$ for all $\ph\in\Phi$. For such $P$ we define the $\Om$-lattice
\begin{equation}      \label{Pptau}
P^+_\Om=(P^+,\{\f^{P^+}:\f\in\Om\}),
\end{equation}
where $\f^{P^+}$ is the restriction of the function $f^P_{\ph_\f}$ to $P^+$.

\begin{example} \textbf{Residuated lattices.}\label{Lambek}  
To describe some of the structures appearing in 
\cite{dunn:cano05,gehr:gene06,cher:gene12,coum:rela14},
let $\sL^*=\sL\cup \{ \ov T\}$ with $\ov T$ a ternary relation symbol. An $\sL^*$-structure has the form
$P=(X,Y,R,T)$. We want to have $R\sub X\times Y$ and $T\sub X\times X\times Y$,  two properties that are expressible by first-order $\sL^*$-sentences (see \eqref{defpolar}).
$P$ is then called \emph{separating} if it satisfies
\begin{align*}
&\forall x,x'\in X(\rho_R\{x\}=\rho_R\{x'\} \text{ implies } x=x'),
\\
&\forall y,y'\in Y(\lam_R\{y\}=\lam_R\{y'\} \text{ implies } y=y').
\end{align*}
$P$ is \emph{reduced} if
\begin{align*}
&\forall x\in X\exists y\in Y (\text{not }xR y \text{ and }\forall x'\in X(\rho_R\{x\}\subset \rho_R\{x'\}  \text{ implies }x'Ry)),
\\
&\forall y\in Y\exists x\in X (\text{not }xR y \text{ and }\forall y'\in Y(\lam_R\{y'\}\subset \lam_R\{y\}  \text{ implies }xRy')).
\end{align*}
$P$ is a \emph{Lambek frame} if it is separating and reduced and for all $x_0,x_1\in X$ and $y\in Y$, the following sets, which are \emph{sections of} $T$, are stable:
\begin{align*}
T[x_0,x_1,-] &=\{y'\in Y:T(x_0,x_1,y')\},
\\
T[x_0,-,y] &=\{x\in X:T(x_0,x,y)\},
\\
T[-,x_1,y]  &=\{x\in X:T(x,x_1,y)\}.
\end{align*}
These conditions defining a Lambek frame are readily expressible as first-order $\sL^*$-sentences, i.e.\  the  Lambek frames form an \emph{elementary class}.

A `fusion' operation $\otimes$ on subsets of $X$ in a Lambek frame is given by
$$
A_0\otimes A_1=\bigcap\{\lam_R\{y\}: \forall x_0,x_1\in X(x_0\in A_0\ \&\ x_1\in A_1\text{ implies }T(x_0,x_1,y)\}.
$$
$A_0\otimes A_1$ is stable, being an intersection of stable sets. Hence $P^+$ is closed under the operation 
$\otimes$.
$A_0\otimes A_1$ is not first-order definable by an $\sL^*$-formula, but it is ``first-order relative to $A_0$ and $A_1$''. If $\sL^*_2=\sL^*\cup\{S_0,S_1\}$, where $S_0$ and $S_1$ are unary relation symbols interpreted as $A_0$ and $A_1$, then 
$A_0\otimes A_1$ is defined in the  $\sL^*_2$-expansion $\<P,A_0,A_{1}\>$ of $P$, by an $\sL^*_2$-formula $\ph(v)$, namely
$$
\forall v_2\big[\,\ov Y(v_2)\land\forall v_0\forall v_1\big(S_0(v_0)\land S_1(v_1)\to \ov T(v_0,v_1,v_2)\big)\to v\ov Rv_2\big].
$$
In other words, $A_0\otimes A_1$ is the set $\{x\in X:\<P,A_0,A_{1}\>\models \ph[x]\}$ of all elements of $X$ that satisfy $\ph$ in $\<P,A_0,A_{1}\>$ when the element is taken as the value of the free variable $v$ of $\ph$.

There are a number of possible properties of the fusion operation on $P^+$ that correspond to a first-order $\sL^*$-condition on $P$, including $\otimes$ being associative, commutative, square-increasing ($A\sub A\otimes A$) and
right-lower-bounded ($A_0\otimes A_1\sub A_1$) \cite[Section 6]{dunn:cano05}. For instance, $\otimes$ is commutative iff $P$ satisfies the sentence
\begin{equation} \label{comfuse}
\forall v_0\forall v_1\forall v_2(\ov T(v_0,v_1,v_2)\leftrightarrow \ov T(v_1,v_0,v_2)).
\end{equation}
Thus the class of all Lambek frames with commutative $\otimes$ is elementary.

There are binary operations $A_0\backslash A_1$ and  $A_0 /A_1$ on $P^+$ that are first-order $\sL^*_2$-definable and are left and right residuals of $\otimes$, meaning they have
$$
B\le A\backslash C  \quad\text{iff}\quad   A\otimes B\le C     \quad\text{iff}\quad A\le C/B.
$$
These residuals are given by
\begin{align*}
A_0\backslash A_1 &=\{ x\in X: \forall x_0\forall y(x_0\in A_0\ \&\ y\in\rho_R A_1\to T(x_0,x,y))\},
\\
A_0/ A_1                 &=\{ x\in X: \forall x_1\forall y(x_1\in A_1\ \&\   y\in \rho_R A_0\to T(x,x_1,y))\},                               
\end{align*}
indicating that they are first-order $\sL^*_2$-definable. We also have
\begin{align*}
 A_0\backslash A_1 &= \bigcap\{T[x_0,-,y]:  x_0\in A_0\ \&\ y\in \rho_R A_1 \},
 \\
 A_0/ A_1                 &=\bigcap\{T[-,x_1,y]:  x_1\in A_1\ \&\   y\in \rho_R A_0 \},
\end{align*} 
showing that $A_0\backslash A_1$ and $A_0/ A_1 $ are stable in a Lambek frame.

Fusion preserves all joins in both coordinates, so is a complete normal operator on $P^+$. $A_0/ A_1$ preserves meets in its \emph{numerator} $A_0$, but turns joins into meets in its \emph{denominator} $A_1$.  $A_0\backslash A_1$ behaves likewise with respect to its numerator $A_1$ and denominator $A_0$ \cite[Chap.~3]{gala:resi07}.
If $A_0/ A_1$ is viewed as a map  from $P^+\times (P^+)^\partial$ to $P^+$, where $(P^+)^\partial$ is the order-dual of $P^+$, then it preserves meets in both arguments, so becomes a complete normal dual operator. Likewise for $A_0\backslash A_1$ as a map $(P^+)^\partial\times P^+\to P^+$.

Put $\Om=\{\otimes,\backslash,/\}$ and take $\Sig$ to be the class of all Lambek frames that satisfy \eqref{comfuse}. If $\Phi$ consists of the  $\sL^*_2$-formulas defining $\otimes$ and its residuals over Lambek frames, then $\Sig_\Phi=\Sig$, an elementary class.  
$\{P^+_\Om: P\in\Sig_\Phi\}$ is a class of commutative residuated lattices.
\qed
\end{example}
 
 \begin{example} \textbf{Modal operators.}
 We follow  \cite{conr:cate16,conr:algo16} in modelling a pair of unary modalities, $\bo$ and $\di$, using polarities with an additional binary relation  $T\sub X\times Y$. This requires that for each $x\in X$ and $y\in Y$, the  $T$-sections 
$$
T[x,-]=\{y\in Y:xTy\} \quad \text{and} \quad T[-,y]=\{x\in X:xTy\}
$$
are stable. The class  of such structures is definable by $\sL^*$-sentences, where  $\sL^*=\sL\cup \{ \ov T\}$ with $\ov T$ a binary relation symbol. For $A\sub X$, define
\begin{alignat*}{3}   
&\bo A &&=\  \{x\in X:\rho_R A\sub T[x,-]\} &&=  \bigcap\{T[-,y]:y\in\rho_R A\},
\\
&\di A &&= \lam\{y\in Y: A\sub T[-,y]\}&&=\bigcap\{\lam\{y\}: A\sub T[-,y]\}.
\end{alignat*}
Then in fact
\begin{align*}
\bo A& = \{x: \forall y[\, \forall z(z\in A\to zRy)\to xTy\,]\}
\\
\di A& = \{x:\forall y[\, \forall z(z\in A\to zTy)\to xRy\,] \},
\end{align*}
indicating that $\bo A$ and $\di A$ are first-order $\sL^*_1$-definable. Both are stable subsets of $X$, being intersections of families of stable sets. $\bo$ and $\di$ are isotone as (first-order definable) operations on $(\wp X,\sub)$, and  $\di$ is left adjoint to $\bo$ in the sense that for any $A,B\sub X$,
$$
\di A\sub B \quad\text{iff}\quad A\sub\bo B.
$$
It is a standard fact that a left adjoint on a complete lattice preserves all joins, while its right adjoint preserves all meets. Thus $\di$ is a complete normal operator on $P^+$, and $\bo$ is a complete normal dual operator.
 \qed
 \end{example}
 
 \section{Ultraproducts of polarity-based structures.}
 
 We recall the definition of  the ultraproduct $\prod_U X_i$ of a collection $\{X_i:i\in I\}$ of sets modulo an ultrafilter $U$ on the index set $I$.  Define an equivalence relation $\sim_U$ on the  direct product $\prod_I X_i$ by putting
 $f\sim_U g$ iff $\{i\in I:f(i)=g(i)\}\in U$, and let $f^U$ be the equivalence class of $f$. Then  
 $\prod_U X_i=\{f^U:f\in \prod_I X_i\}$.
 
 If $\{P_i=(X_i,Y_i,R_i):i\in I\}$ is a set of polarities,
 the ultraproduct $\prod_U P_i$ is defined to be the polarity $(\prod_U X_i,\prod_U Y_i, R^U)$, where the binary relation 
 $R^U$ from  $\prod_U X_i$ to $\prod_U Y_i$ has 
 $$
 f^UR^Ug^U \quad\text{iff} \quad \{i\in I:f(i)R_ig(i)\}\in U.
 $$
 More generally, let each $P_i$ be an $\sL^*$-structure, where $\sL^*$ is an expansion of the signature $\sL$ for polarities by the addition of some finitary relational symbols $\ov T$. Then an $n$-ary $\ov T$ will denote an $n$-ary relation $T_i$ on $X_i\cup Y_i$ for all $i\in I$. We then define $T^U$ on  $\prod_U X_i\cup\prod_U Y_i$ by
$$
T^U (\vc{f^U}{n})\quad\text{iff} \quad \{i\in I:T_i(f_0(i),\dots,f_{n-1}(i))\}\in U.
$$ 
In this way we obtain the  ultraproduct $\prod_U P_i$ as an $\sL^*$-structure.
When all the factors $P_i$ are equal to a single  $P$, then the ultraproduct is the \emph{ultrapower} 
$P^U$ of $P$ modulo $U$.  

 If $\ph(\vc{v}{n})$ is an $\sL^*$-formula and $\vc{f}{n}\in (\prod_I X_i)\cup(\prod_I Y_i)$, let
 \[
 \br{\ph(\vc{f}{n})}=\{i\in I:P_i\models \ph[f_0(i),\dots,f_{n-1}(i)]\}.  
 \]
\L o\'s's Theorem \cite[4.1.9]{chan:mode73} states that  
$$\textstyle
\prod_U P_i\models\ph[f_0^U,\dots,f_{n-1}^U] \quad\text{iff}\quad 
 \br{\ph(f_0,\dots,f_{n-1})}\in U.
 $$
Hence if $\ph$ is a sentence, then  $\prod_U P_i\models\ph$ iff $\{i:P_i\models\ph\}\in U$. This implies that if a class of 
$\sL^*$-structures  is elementary, i.e.\ is the class of all models of some set of $\sL^*$-sentences, then it must be closed under ultraproducts.

 \L o\'s's theorem can be reformulated as a result about definable sets.
If $\al\in\prod_I\wp X_i$ and $f\in \prod_I X_i$, then $\al^U\in\prod_U\wp X_i$ and we define
\begin{equation}  \textstyle  \label{defthet}
\thet(\al^U)=\{f^U\in \prod_U X_i: \{i\in I:f(i)\in\al(i)\}\in U\}.
\end{equation}
 $\thet(\al^U)$ is a well-defined function of $\al^U$, since the righthand set of equation \eqref{defthet} is unchanged if $\al$ is replaced by any $\al'\in\prod_I\wp X_i$ with $\al^U=\al'\,^U$.

\begin{lemma} \label{Los}    \emph{\cite[Lemma 5.2]{gold:cano18}}
Let $\ph(v_0,\dots,v_{n-1})$ be any $\sL^*$-formula. Suppose $\al\in\prod_I\wp X_i$ and $f_0,\dots,f_{n-1} \in (\prod_I X_i)\cup(\prod_I Y_i)$. If for all $i\in I$,
\[
\al(i)=\{x\in X_i:P_i\models\ph[x,f_0(i),\dots,f_{n-1}(i)]\},
\]
then \enspace
$   \textstyle
\thet(\al^U)= \{f^U\in\prod_UX_i : \prod_U P_i\models\ph[f^U,f_0^U,\dots,f_{n-1}^U]\}.
$
\qed
\end{lemma}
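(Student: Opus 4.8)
The plan is to reduce the claimed set identity to a pointwise biconditional and then recognise that biconditional as an instance of \L o\'s's theorem in the truth-set form stated above. Since both sides of the asserted equation are subsets of $\prod_U X_i$, it suffices to fix an arbitrary $f\in\prod_I X_i$ and establish that $f^U\in\thet(\al^U)$ if and only if $\prod_U P_i\models\ph[f^U,f_0^U,\dots,f_{n-1}^U]$. In this sense the lemma is essentially a transcription of \L o\'s's theorem into the language of definable sets, and the proof is a short chain of equivalences.

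First I would unwind the definition \eqref{defthet} of $\thet$: by that definition, $f^U\in\thet(\al^U)$ holds precisely when $\{i\in I:f(i)\in\al(i)\}\in U$. The next step is to rewrite the set occurring here using the hypothesis on $\al$. Since $\al(i)=\{x\in X_i:P_i\models\ph[x,f_0(i),\dots,f_{n-1}(i)]\}$ for every $i$, membership $f(i)\in\al(i)$ is equivalent to $P_i\models\ph[f(i),f_0(i),\dots,f_{n-1}(i)]$, so that
\[
\{i\in I:f(i)\in\al(i)\}=\br{\ph(f,f_0,\dots,f_{n-1})},
\]
the \L o\'s truth-set of $\ph$ evaluated at the tuple $(f,f_0,\dots,f_{n-1})$. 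Finally I would apply \L o\'s's theorem to this truth-set: it lies in $U$ if and only if $\prod_U P_i\models\ph[f^U,f_0^U,\dots,f_{n-1}^U]$. Concatenating the three equivalences yields the desired pointwise biconditional, and hence the set identity.

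I expect no substantive obstacle: the content is already carried by \L o\'s's theorem, and the remaining work is purely bookkeeping. The two points that need attention are, first, that both sides depend only on the classes $f^U,f_0^U,\dots,f_{n-1}^U$ and not on the chosen representatives — for the left-hand side this is the well-definedness of $\thet$ remarked after \eqref{defthet}, while for the right-hand side it is automatic, the satisfaction relation being a genuine property of elements of the structure $\prod_U P_i$ — and, second, that the free variable receiving the element $x$ (respectively $f^U$) must be tracked separately from the parameter variables receiving $f_0,\dots,f_{n-1}$. Neither demands more than careful indexing, so the only care required is in aligning the arguments of $\ph$ correctly when passing between the factors $P_i$ and the ultraproduct.
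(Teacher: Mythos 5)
Your proof is correct and is exactly the standard argument: unwind \eqref{defthet}, substitute the hypothesis on $\al(i)$ to identify $\{i:f(i)\in\al(i)\}$ with the truth-set $\br{\ph(f,f_0,\dots,f_{n-1})}$, and apply \L o\'s's theorem. The paper gives no proof of its own here (it cites Lemma 5.2 of the earlier paper, whose proof is this same chain of equivalences), so there is nothing further to compare.
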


Now take a set $\Om$ of operation symbols, a set
 $\Phi=\{\ph_\f:\f\in\Om\}$ of $\sL^*_\om$-formulas having one free variable, and a class $\Sig$ of $\sL^*$-structures. Let
  $\Sig_\Phi$  be the class of all  $P\in\Sig$ such that  $P^+$ is closed under  $f^P_\ph$ for all $\ph\in\Phi$ and hence gives rise to the  $\Om$-lattice $P^+_\Om$ of \eqref{Pptau}.

\begin{theorem}  \label{Fhom}
Let $\Sig_\Phi$ be closed under ultraproducts.
For any  collection $\{P_i:i\in I\}\sub\Sig_\Phi$ and ultrafilter $U$ on $I$, the map $\al^U\mapsto\thet(\al^U)$ is an $\Om$-lattice monomorphism
$$\textstyle\thet\colon \prod_U(P_i)^+_\Om   \mono    ( \prod_UP_i)^+_\Om $$ 
from the $U$-ultraproduct of the stable set $\Om$-lattices $(P_i)^+_\Om$ into the stable set $\Om$-lattice of the ultraproduct $\prod_U P_i$.\end{theorem}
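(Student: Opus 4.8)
The plan is to verify, in order, that $\thet$ maps into $(\prod_U P_i)^+_\Om$, is injective, respects the bounded-lattice structure, and respects each operation $\f\in\Om$; a single set-parameter form of \L o\'s's theorem drives all four clauses. Fix families $\al_0,\dots,\al_{n-1}\in\prod_I\wp X_i$ (or $\prod_I\wp Y_i$) and view each factor as an $\sL^*_n$-structure $\<P_i,\al_0(i),\dots,\al_{n-1}(i)\>$ in which the unary symbol $S_m$ is interpreted as $\al_m(i)$. The key observation is that the ultraproduct of these expansions is exactly $\<\prod_U P_i,\thet(\al_0^U),\dots,\thet(\al_{n-1}^U)\>$, because the coordinatewise interpretation of $S_m$ in the ultraproduct is, by the ultraproduct definition for relation symbols, precisely $\{f^U:\{i:f(i)\in\al_m(i)\}\in U\}=\thet(\al_m^U)$ (using on the $Y$-sorts the analogue $\thet'$ of $\thet$). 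Applying \L o\'s's theorem over $\sL^*_n$ to any one-free-variable $\sL^*_n$-formula $\ph$ then yields the master identity $\thet(\delta^U)=\{f^U:\<\prod_U P_i,\thet(\al_0^U),\dots\>\models\ph[f^U]\}$, where $\delta(i)=\{x\in X_i:\<P_i,\al_0(i),\dots\>\models\ph[x]\}$, together with its symmetric $Y$-sort version for $\thet'$; this generalises Lemma~\ref{Los} to set parameters.

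Second, I would recover that $\thet$ is a bounded-lattice embedding (the lattice-level statement is from \cite{gold:cano18}). Taking $\ph$ to be the right-set formula $\rho S$ and the matching left-set formula in the master identity gives the commutations $\rho_{R^U}\thet(\al^U)=\thet'((\rho_{R_i}\al(i))^U)$ and $\lam_{R^U}\thet'(\beta^U)=\thet((\lam_{R_i}\beta(i))^U)$. Stability of $\thet(\al^U)$ is then immediate from $\lam_{R^U}\rho_{R^U}\thet(\al^U)=\thet((\lam_{R_i}\rho_{R_i}\al(i))^U)=\thet(\al^U)$, the last step using stability of each $\al(i)$; so $\thet$ does land in $(\prod_U P_i)^+$. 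Meets are preserved by the filter identity $\{i:f(i)\in\al(i)\cap\beta(i)\}=\{i:f(i)\in\al(i)\}\cap\{i:f(i)\in\beta(i)\}$; the top maps to $\prod_U X_i$ and the bottom $(\lam_{R_i}\rho_{R_i}\emptyset)^U$ maps via the commutations to $\lam_{R^U}\rho_{R^U}\emptyset=0$ (since $\thet(\emptyset^U)=\emptyset$); and, because in any stable set lattice $A\lor B=\lam_R(\rho_R A\cap\rho_R B)$, the two commutations together with meet-preservation of $\thet'$ give preservation of joins. Injectivity, indeed order-reflection, follows by a witness argument: if $\al^U\nleqslant\beta^U$ then $\{i:\al(i)\nsubseteq\beta(i)\}\in U$, and choosing $x_i\in\al(i)\setminus\beta(i)$ on that set produces an $f$ with $f^U\in\thet(\al^U)\setminus\thet(\beta^U)$.

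Third, the genuinely new content is preservation of each $\f\in\Om$ with $n$-ary defining formula $\ph_\f\in\Phi$. Since the operations of the ultraproduct $\Om$-lattice $\prod_U(P_i)^+_\Om$ are computed coordinatewise, the value of $\f$ there at $(\al_0^U,\dots,\al_{n-1}^U)$ is $\gamma^U$, where $\gamma(i)=f^{P_i}_{\ph_\f}(\al_0(i),\dots,\al_{n-1}(i))$ is the subset of $X_i$ defined by $\ph_\f$ in $\<P_i,\al_0(i),\dots\>$; this $\gamma(i)$ is stable since $P_i\in\Sig_\Phi$, so $\gamma^U$ is a genuine element of $\prod_U(P_i)^+_\Om$. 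The master identity with $\ph=\ph_\f$ now gives $\thet(\gamma^U)=\{f^U:\<\prod_U P_i,\thet(\al_0^U),\dots\>\models\ph_\f[f^U]\}=f^{\prod_U P_i}_{\ph_\f}(\thet(\al_0^U),\dots,\thet(\al_{n-1}^U))$, which is exactly the value of $\f$ in $(\prod_U P_i)^+_\Om$ at $(\thet(\al_0^U),\dots,\thet(\al_{n-1}^U))$. Thus $\thet$ commutes with $\f$, and $\thet$ is an $\Om$-lattice monomorphism.

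I expect the main obstacle to be conceptual rather than computational: correctly setting up the master identity, in particular tracking that the coordinatewise set parameters $\al_m(i)$ assemble under the ultraproduct into the interpretations $\thet(\al_m^U)$, so that the expanded ultraproduct and the ultraproduct of expansions coincide. Once that identification is in place each clause is essentially a one-line application. The remaining care is bookkeeping: closure of $\Sig_\Phi$ under ultraproducts is exactly what makes the target operation defined (i.e.\ $(\prod_U P_i)^+$ closed under $f^{\prod_U P_i}_{\ph_\f}$), while $P_i\in\Sig_\Phi$ is what keeps each $\gamma(i)$ stable.
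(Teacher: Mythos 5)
Your proposal is correct and follows essentially the same route as the paper: the heart of the argument in both cases is the identification of the ultraproduct of the expansions $\<P_i,\al_0(i),\dots,\al_{n-1}(i)\>$ with $\<\prod_UP_i,\thet(\al_0^U),\dots,\thet(\al_{n-1}^U)\>$, followed by \L o\'s's theorem over $\sL^*_n$ applied to $\ph_\f$ and the coordinatewise computation of $\f$ in $\prod_U(P_i)^+_\Om$. The only difference is that you re-derive the bounded-lattice embedding (stability, meets, joins, bounds, order-reflection), which the paper simply cites from \cite[Theorem 5.3]{gold:cano18}; your sketch of that part is also sound.
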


\begin{proof}   In \cite[Theorem 5.3]{gold:cano18} is was shown that $\thet$ is a lattice monomorphism from
  $ \prod_U(P_i^+)$ into    $( \prod_UP_i)^+ $. It suffices then to show that $\thet$ preserves the operations indexed by $\Om$.

Let $\f\in\Om$ be $n$-ary.  In each $\Om$-lattice $(P_i)^+_\Om$, $\f$ is assigned the function $\f^{P_i^+}$ defined by 
$\ph_\f$ (see \eqref{Pptau}). Since $\Sig_\Phi$ is closed under ultraproducts, it contains $\prod_UP_i$, so in  the $\Om$-lattice $ ( \prod_UP_i)^+_\Om $, $\f$ is assigned the function $\f^{( \prod_UP_i)^+} $ defined by $\ph_\f$.

In the ultraproduct of $\Om$-lattices $\prod_U(P_i)^+_\Om$, the definition of $\f^{\prod_U(P_i^+)}$ is that
\begin{equation}  \label{deffbeta}
\f^{\prod_U(P_i^+)}(\al_0^U,\dots,\al_{n-1}^U)=\beta^U, 
\end{equation}
where $\beta\in\prod_I(P_i^+)$ has
$$
\beta(i)=\f^{P_i^+}(\al_0(i),\dots,\al_{n-1}(i))
$$
for all $i\in I$. Thus by \eqref{Fdef} with $P=P_i$,
\begin{equation} \label{beta(i)}
\beta(i)=\{x\in X_i:\<P_i,\al_0(i),\dots,\al_{n-1}(i)\>\models \ph_\f[x]\}.
\end{equation}
To prove the Theorem we need to show that
$\thet$ is a homomorphism for the functions $\f^{\prod_U (P_i^+)}$ and $\f^{( \prod_UP_i)^+}$,  which means that
\begin{equation} \label{homthet}
\thet\big(\f^{\prod_U (P_i^+)} (\al_0^U,\dots,\al_{n-1}^U)\big)=
\f^{( \prod_UP_i)^+}  \big(\thet(\al_0^U),\dots,\thet(\al_{n-1}^U)\big) 
\end{equation}
for all $\al_0,\dots,\al_{n-1}\in\prod_I(P_i^+)$.

Now the $U$-ultraproduct of the $\sL_n^*$-structures $\<P_i,\al_0(i),\dots,\al_{n-1}(i)\>$ is the structure
$\<\prod_UP_i,\thet(\al_0^U),\dots,\thet(\al_{n-1}^U)\>$ in which $S_m$ is interpreted as $\thet(\al_{m}^U)$.
This is because $S_m$ is interpreted as $\al_m(i)$ in each $P_i$, hence is interpreted in $\prod_UP_i$ as 
$$\textstyle
\{f^U\in \prod_U X_i: \{i\in I:f(i)\in\al_m(i)\}\in U\},
$$
which is $\thet(\al_{m}^U)$ by  \eqref{defthet}). 
Thus by Lemma \ref{Los} and \eqref{beta(i)},
$$\textstyle
\thet(\beta^U)=
\{f^U\in\prod_UX_i:\<\prod_UP_i,\thet(\al_0^U),\dots,\thet(\al_{n-1}^U)\>\models\ph_\f[f^U]\}.
$$
This implies, by \eqref{Fdef} with $P=\prod_UP_i$ and $A_m=\thet(\al_m^U)$, that
$$
\thet(\beta^U)=
\f^{(\prod_UP_i)^+}(\thet(\al_0^U),\dots,\thet(\al_{n-1}^U)).
$$
But from \eqref{deffbeta},
$$
\thet(\beta^U)=
\thet(\f^{\prod_U(P_i^+)}(\al_0^U,\dots,\al_{n-1}^U)),
$$
so the last two equations imply the desired equation \eqref{homthet}.
\end{proof}
 The ultrapower case of this theorem  states that if $P\in\Sig_\Phi$, then $\thet$ is an $\Om$-monomorphism 
$$
(P^+_\Om)^U\mono (P^U)^+_\Om.
$$
At the  lattice level $\thet$ is a lattice embedding
$ (P^+)^U\mono (P^U)^+$  that was shown in \cite[Theorem 6.1]{gold:cano18} to give a MacNeille completion of the ultrapower $ (P^+)^U$. We now extend that fact to the $\Om$-lattice level.

\begin{lemma}    \label{completeMac}
Let $\Sig_\Phi$ be closed under ultraproducts.
Let $P\in\Sig_\Phi$,
$U$ be any ultrafilter on a set $I$, and  $\f\in\Om$.
\begin{enumerate}[\rm(1)]
\item 
If $\f^{ (P^U)^+}$ is a complete normal operator, then it is the lower MacNeille extension of $\f^{ (P^+)^U}$.
\item 
If $\f^{ (P^U)^+}$ is a complete normal dual operator,, then it is the upper MacNeille extension of $\f^{ (P^+)^U}$.
\end{enumerate}
\end{lemma}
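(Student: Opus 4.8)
The plan is to read off the MacNeille extension formulas \eqref{lowerMc} directly, using two facts about $\thet$. First, by \cite[Theorem 6.1]{gold:cano18}, at the lattice level $\thet\colon (P^+)^U \mono (P^U)^+$ is a MacNeille completion, so $\thet[(P^+)^U]$ is both join-dense and meet-dense in $(P^U)^+$. Second, by the ultrapower case of Theorem \ref{Fhom}, $\thet$ is an $\Om$-monomorphism $(P^+_\Om)^U \mono (P^U)^+_\Om$; thus for $n$-ary $\f\in\Om$ and all $a_0,\dots,a_{n-1}\in(P^+)^U$,
\[
\thet\big(\f^{(P^+)^U}(a_0,\dots,a_{n-1})\big)=\f^{(P^U)^+}\big(\thet(a_0),\dots,\thet(a_{n-1})\big).
\]
Because the coordinatewise embedding $((P^+)^U)^n \mono ((P^U)^+)^n$ is itself a MacNeille completion \cite[Prop.~2.5]{theu:macn07}, the $n$-ary MacNeille extensions are computed coordinatewise, so it is enough to match the single-variable formulas \eqref{lowerMc}.

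For (1), assume $\f^{(P^U)^+}$ is a complete normal operator and fix $x_0,\dots,x_{n-1}\in(P^U)^+$. By join-density each $x_j=\join A_j$ with $A_j=\{\thet(a):a\in(P^+)^U,\ \thet(a)\le x_j\}$, a nonempty set since $\thet(0)=0\le x_j$. I would then apply the distribution law \eqref{joincomplete} for complete normal operators coordinate by coordinate, and substitute $\f^{(P^U)^+}(\thet(a_0),\dots,\thet(a_{n-1}))=\thet(\f^{(P^+)^U}(a_0,\dots,a_{n-1}))$ from the homomorphism property, obtaining
\[
\f^{(P^U)^+}(x_0,\dots,x_{n-1})=\join\{\thet(\f^{(P^+)^U}(a_0,\dots,a_{n-1})):\thet(a_j)\le x_j\text{ for all }j<n\}.
\]
The right-hand side is precisely the lower MacNeille extension $\ov{\f^{(P^+)^U}}$ evaluated at $(x_0,\dots,x_{n-1})$ according to \eqref{lowerMc}, so $\f^{(P^U)^+}=\ov{\f^{(P^+)^U}}$.

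Part (2) is entirely dual. When $\f^{(P^U)^+}$ is a complete normal dual operator it preserves all meets in each coordinate, so writing each $x_j=\meet\{\thet(a):x_j\le\thet(a)\}$ via meet-density and using the same homomorphism substitution yields the upper MacNeille extension formula in \eqref{lowerMc}, giving $\f^{(P^U)^+}=\widehat{\f^{(P^+)^U}}$. I expect no serious obstacle here: the work is bookkeeping. The only points needing care are verifying that the index sets are nonempty so that the complete-operator distribution genuinely applies, and that the coordinatewise MacNeille identification really does reduce the $n$-ary extension to the single-variable formulas \eqref{lowerMc}. The conceptual heart is simply that join-density together with complete join preservation pins down $\f^{(P^U)^+}$ uniquely from its restriction to $\thet[(P^+)^U]$, while homomorphy identifies that restriction with $\thet\circ\f^{(P^+)^U}$, which is exactly the data from which the lower MacNeille extension is built.
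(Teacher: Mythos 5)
Your proposal is correct and follows essentially the same route as the paper's proof: join-density of $\thet[(P^+)^U]$ in the MacNeille completion $(P^U)^+$, the distribution law \eqref{joincomplete} for the complete normal operator, and the homomorphism identity from Theorem \ref{Fhom} to rewrite the join as the lower MacNeille extension formula \eqref{lowerMc}, with the dual argument for (2). The only cosmetic differences are that the paper illustrates the computation for binary $\f$ while you work with general arity, and you make explicit the nonemptiness of the index sets and the coordinatewise identification of the $n$-ary MacNeille completion, points the paper leaves tacit.
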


\begin{proof}
We demonstrate the proof of (1) for the case that $\f$ is binary, since this typifies the general case.
Since $\thet\colon (P^+)^U\mono (P^U)^+$ a MacNeille completion of $ (P^+)^U$, we know that each $A\in  (P^U)^+$ is the join of members of $\image\thet$, hence 
$$
A=\join\{\thet(B):B\in (P^+)^U\ \&\ \thet(B)\le A\}.
$$
 Using this and the hypothesis that  $\f^{ (P^U)^+}$ preserves all joins, we get from \eqref{joincomplete} that for any $A_1,A_2\in  (P^U)^+$, the element $\f^{ (P^U)^+}(A_1,A_2)$ of  $(P^U)^+$  is equal to
$$
\join\{\f^{ (P^U)^+}(\thet(B_1),\thet(B_2)): B_i\in (P^+)^U\ \&\ \thet(B_i)\le A_i \text{ for }i=1,2\}.
$$
But $\f^{ (P^U)^+}(\thet(B_1),\thet(B_2))=\thet(\f^{(P^+)^U}(B_1,B_2))$, since $\thet$ is an $\Om$-homomorph-ism by Theorem \ref{Fhom}. So we get that $\f^{ (P^U)^+}(A_1,A_2)$ is equal to
$$
\join\{\thet(\f^{(P^+)^U}(B_1,B_2)): B_i\in (P^+)^U\ \&\ \thet(B_i)\le A_i \text{ for }i=1,2\}.
$$
But a complete normal operator is isotone, so this last join 
 is  the lower MacNeille extension $ \ov{\f^{ (P^+)^U} }(A_1,A_2) $ by \eqref{lowerMc}.

That proves (1). An order-dual argument gives (2).
\end{proof}

Now suppose $\Om$ is given as a disjoint union $\Lam\cup\Ups$, allowing us to define $\LL^\sg$ and $\ov\LL$ for any $\Om$-lattice $\LL$ according to \eqref{Lsigom} and \eqref{Lovom}. Then we can formulate the following, one of the principal results of this paper.

\begin{theorem}  \label{ephienlarge}
Let $\Sig_\Phi$ be closed under ultraproducts, and suppose that for each $P\in\Sig_\Phi$ and $\f\in\Om$, $\f^{P^+}$ is a complete normal operator if $\f\in\Lam$, and a complete normal dual operator  if $\f\in\Ups$. Then for any $P\in\Sig_\Phi$ there is an ultrafilter $U$ and an $\Om$-monomorphism 
$(P^+_\Om)^\sg\mono (P^U)^+_\Om$
from the canonical extension of the $\Om$-lattice $P^+_\Om$ into the stable set $\Om$-lattice of the  $U$-ultrapower of $P$.
\end{theorem}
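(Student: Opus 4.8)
The plan is to obtain the desired embedding by composing the abstract MacNeille-embedding supplied by Theorem \ref{LLsigmaembed} with an identification of the MacNeille completion of the $\Om$-lattice ultrapower $(P^+_\Om)^U$ with the concrete stable set $\Om$-lattice $(P^U)^+_\Om$. First I would observe that $P^+_\Om$ is a monotone $\Om$-lattice: by hypothesis each $\f^{P^+}$ is either a complete normal operator (for $\f\in\Lam$) or a complete normal dual operator (for $\f\in\Ups$), and both kinds of operation are isotone in each coordinate. Hence Theorem \ref{LLsigmaembed} applies to $\LL=P^+_\Om$, yielding an ultrafilter $U$ and an $\Om$-monomorphism
$$
(P^+_\Om)^\sg\mono \ov{(P^+_\Om)^U}
$$
from the canonical extension of $P^+_\Om$ into the MacNeille completion of the ultrapower $(P^+_\Om)^U$, the latter formed as in \eqref{Lovom}. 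I would fix this $U$ for the remainder of the argument.

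The crux is then to show that $\thet\colon(P^+_\Om)^U\mono (P^U)^+_\Om$ exhibits $(P^U)^+_\Om$ as a MacNeille completion of the $\Om$-lattice $(P^+_\Om)^U$. At the underlying-lattice level this is already available: $\thet\colon(P^+)^U\mono (P^U)^+$ is a MacNeille completion by \cite[Theorem 6.1]{gold:cano18}, and by Theorem \ref{Fhom} the map $\thet$ is an $\Om$-homomorphism. What remains is to match the operations. By \eqref{Lovom} the operations of $\ov{(P^+_\Om)^U}$ are the lower MacNeille extensions $\ov{\f^{(P^+)^U}}$ for $\f\in\Lam$ and the upper MacNeille extensions $\widehat{\f^{(P^+)^U}}$ for $\f\in\Ups$. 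Since $\Sig_\Phi$ is closed under ultraproducts we have $P^U\in\Sig_\Phi$, so by hypothesis $\f^{(P^U)^+}$ is a complete normal operator when $\f\in\Lam$ and a complete normal dual operator when $\f\in\Ups$. Lemma \ref{completeMac} then identifies $\f^{(P^U)^+}$ with exactly these MacNeille extensions. Thus, transporting the operations along the MacNeille embedding $\thet$, the $\Om$-lattice $(P^U)^+_\Om$ is a MacNeille completion of $(P^+_\Om)^U$; by the uniqueness of MacNeille completions up to an isomorphism commuting with the embeddings, $\thet$ is an $\Om$-isomorphism $\ov{(P^+_\Om)^U}\cong (P^U)^+_\Om$.

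Composing the monomorphism of Theorem \ref{LLsigmaembed} with this isomorphism then yields the required $\Om$-monomorphism $(P^+_\Om)^\sg\mono (P^U)^+_\Om$. I do not expect any single step to present a serious obstacle, since the heavy lifting has already been carried out in Theorem \ref{LLsigmaembed} and Lemma \ref{completeMac}; the point that most warrants care is the bookkeeping that aligns the several roles played by the ultrafilter and by the $\Lam$/$\Ups$ split. Specifically, one must verify that the lower/upper division used to build $\ov{(P^+_\Om)^U}$ in \eqref{Lovom} is precisely the division that Lemma \ref{completeMac} converts into the concrete operations $\f^{(P^U)^+}$, and that the single ultrafilter $U$ delivered by Theorem \ref{LLsigmaembed} may legitimately be fed into the ultraproduct-closure facts about $\Sig_\Phi$, so that $P^U\in\Sig_\Phi$ and the operator/dual-operator hypotheses transfer to $P^U$.
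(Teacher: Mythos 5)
Your proposal is correct and follows essentially the same route as the paper's own proof: apply Theorem \ref{LLsigmaembed} to the monotone $\Om$-lattice $P^+_\Om$, then identify $\ov{(P^+_\Om)^U}$ with $(P^U)^+_\Om$ via \cite[Theorem 6.1]{gold:cano18}, Theorem \ref{Fhom}, and Lemma \ref{completeMac}. Your explicit checks --- that $P^U\in\Sig_\Phi$ by ultraproduct closure so the operator/dual-operator hypotheses apply to $\f^{(P^U)^+}$, and that uniqueness of MacNeille completions justifies the identification --- are exactly the points the paper relies on, stated slightly more carefully.
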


\begin{proof}      
Put $ \LL= P^+_\Om=(P^+,\{\f^{P^+}:\f\in\Om\})$ in Theorem \ref{LLsigmaembed}. Since operators and dual operators are isotone, we conclude that there is an $\Om$-monomorphism  $(P^+_\Om)^\sg\mono \ov{(P^+_\Om)^U}$ for some $U$. Then it is enough to show that we can take the MacNeille completion $\ov{(P^+_\Om)^U}$ to be $(P^U)^+_\Om$. Now
\begin{equation*}  \label{PplustauU}
(P^+_\Om)^U=( (P^+)^U, \{\f^{(P^+)^U}:\f\in\Om \} ),
\end{equation*}
so by \eqref{Lovom},
$$
\ov{(P^+_\Om)^U}=\Big( \ov{(P^+)^U}, \Big\{\ov{\f^{(P^+)^U}}:\f\in\Lam \Big\}, \Big\{\widehat{\f^{(P^+)^U}}:\f\in\Ups\Big\} \Big).
$$
By  \cite[Theorem 6.1]{gold:cano18} we can take $\ov{(P^+)^U}$ to be $(P^U)^+$. Then  Lemma \ref{completeMac} and the hypotheses of this Theorem give that
$\ov{\f^{(P^+)^U}}= \f^{ (P^U)^+}$ when $\f\in\Lam$,
and
$\widehat{\f^{(P^+)^U}}= \f^{ (P^U)^+}$ when $\f\in\Ups$. 
Hence
$$
\ov{(P^+_\Om)^U}=((P^U)^+,\{\f^{(P^U)^+}:\f\in\Om\}) = (P^U)^+_\Om,
$$
giving the desired conclusion.
\end{proof}

\section{Generating varieties  closed under canonical extensions}

We now introduce the notion of a canonicity framework and put together our results so far in order to derive our main goal.

The use of the symbols $\mono$ and $\epi$ will be extended to have them denote binary relations between algebras, writing 
$\A\mono\B$ to mean that \emph{there exists} an injective homomorphism from $\A$ to $\B$, and $\A\epi\B$ to mean that there exists an surjective one. 
We consider a situation involving the following four ingredients:
\begin{itemize}
\item 
A class $\Sig$ of structures, of some type, that is closed under ultraproducts.
\item
A variety $\CC$ of algebras of some given algebraic signature.
\item
An operation $(-)^\sg\colon\CC\to\CC$ assigning to each algebra $\A\in\CC$ another algebra $\A^\sg\in\CC$.
\item
An operation $(-)^+\colon\Sig\to\CC$ assigning to each structure $P\in\Sig$ an algebra $P^+\in\CC$.
\end{itemize}
The list $\<\Sig,\CC,(-)^\sg,(-)^+\>$ of these ingredients is  called a \emph{canonicity framework} if it satisfies the following axioms
for all $\A,\B\in\CC$, all indexed subsets $\{P_i:i\in I\}$ of $\Sig$,  and all $P\in\Sig$.

\begin{enumerate}[({A}1)]
\item 
If  $\A\mono\B$ then  $\A^\sg\mono\B^\sg$, and if $\A\epi\B$ then $\A^\sg\epi\B^\sg$.
\item
$ \prod_U(P_i^+)   \mono    ( \prod_UP_i)^+ $, for any ultrafilter $U$ on $I$.
\item
There exists an ultrafilter $U$ such that $(P^+)^\sg\mono (P^U)^+$.
\item
$\big(\prod_I (P_i^+)\big)^\sg \mono \underset{U\in \beta I}{\prod} \big(\prod_U (P_i^+)\big)^\sg$, where $ \beta I$ is the set of all ultrafilters on I.
\end{enumerate}
It was shown in \cite[Theorem 7.1]{gold:cano18} that these axioms yield the following result:
\begin{quote}\em
In any canonicity framework, if $\cS$ is any subclass of $\Sig$ that is closed under ultraproducts, then
the variety of algebras generated by  $\cS^+=\{P^+:P\in\cS\}$ is closed under the  operation     $(-)^\sg$.
\end{quote}
Now given a class of the form $\Sig_\Phi$ (for some $\Om=\Lam\cup\Ups$) that satisfies the description in the first sentence of  Theorem \ref{ephienlarge}, we can  construct a canonicity framework  by taking 
\begin{itemize}
\item
$\Sig$ to be $\Sig_\Phi$;
\item
$\CC$ to be the variety of all $\Om$-lattices $\LL$  in which each $\f^\LL$  is a normal operator if $\f\in\Lam$, and a normal dual operator  if $\f\in\Ups$;
\item
$(-)^\sg$ to be the operation $\LL\mapsto\LL^\sg$ defined in \eqref{Lsigom};
\item
$(-)^+$ to be the operation $P\mapsto P^+_\Om$ as defined in \eqref{Pptau}.   
\end{itemize}
We verify that these definitions fulfil the canonicity framework axioms:
\begin{enumerate}
\item[(A1):]
$\CC$ is a variety  of $\Om$-lattices whose members are \emph{monotone}, since operators and dual operators are isotone in each variable. It was shown in \cite[Theorem 5.4]{gehr:boun01} that the operation $f\mapsto f\lo$ of  lower canonical extension of maps between monotone lattice expansions preserves homomorphisms, injectivity and surjectivity. So for $\LL,\M\in\CC$,  the existence of an  injective or surjective $\Om$-homomorphism $f\colon\LL\to\M$ guarantees the existence of an $\Om$-homomorphism $f\lo:\LL^\sg\to\M^\sg$ with the same property, giving (A1).
Note that $\CC$ is closed under $(-)^\sg$ because if $f$ is an operator then so is $f\lo$,  and if $f$ is a dual operator then so is $f\up$ \cite[Lemma 4.6]{gehr:boun01}. Also  $\{P^+_\Om:P\in\Sig_\Phi\}\sub\CC$ by definition of $\Sig_\Phi$.

\item[(A2):] 
Theorem \ref{Fhom} shows that $\textstyle \prod_U(P_i)^+_\Om   \mono    ( \prod_UP_i)^+_\Om $.

\item[(A3):]  Theorem \ref{ephienlarge} provides any $P\in\Sig_\Phi$ with a $U$ such that
                      $(P^+_\Om)^\sg\mono (P^U)^+_\Om$.

\item[(A4):]  This is an instance of the stronger fact that for any  set $\{\LL_i:i\in I\}$ of $\Om$-lattices  there is an isomorphism
between
$\big(\prod_I\LL_i\big)^\sg$ and  $\prod_{U\in \beta I}\big(\prod_U\LL_i\big)^\sg$
\cite[Theorem 3.1]{gold:cano18}.
\end{enumerate}

Since (A1)--(A4) are satisfied, Theorem 7.1 of
\cite{gold:cano18} (as quoted above) delivers our goal of generalising Fine's canonicity theorem to lattice-based algebras:

\begin{theorem}
Suppose that  $\Sig_\Phi$ is closed under ultraproducts, and  the symbols of $\Lam$ denote complete normal operators in each member of $\Sig_\Phi$, while the symbols of $\Ups$ denote complete normal dual operators.
If $\cS$ is any subclass of\/ $\Sig_\Phi$ that is closed under ultraproducts, then 
the variety of lattice expansions generated by  $\{P^+_\Om:P\in\cS\}$   is closed under canonical extensions.
\qed
\end{theorem}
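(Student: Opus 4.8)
The plan is to realise the data $\<\Sig_\Phi,\CC,(-)^\sg,(-)^+\>$ as a canonicity framework and then invoke the abstract result of \cite[Theorem 7.1]{gold:cano18} quoted above, which asserts that in any canonicity framework every ultraproducts-closed subclass $\cS$ of $\Sig$ generates a subvariety of $\CC$ closed under $(-)^\sg$. Since here $(-)^\sg$ is instantiated as the canonical-extension operation of \eqref{Lsigom}, this specialisation is precisely the desired conclusion. Concretely I would take $\Sig=\Sig_\Phi$ (ultraproducts-closed by hypothesis), let $\CC$ be the variety of $\Om$-lattices whose $\Lam$-operations are normal operators and whose $\Ups$-operations are normal dual operators, let $(-)^\sg$ be as in \eqref{Lsigom}, and let $(-)^+$ be the map $P\mapsto P^+_\Om$ of \eqref{Pptau}. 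The completeness hypotheses give $\{P^+_\Om:P\in\Sig_\Phi\}\sub\CC$, since a complete normal (dual) operator is in particular a normal (dual) operator; note that $\CC$ is required to satisfy only the equational operator and dual-operator laws, completeness not being first-order expressible.

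It then remains to verify the four axioms. First I would check (A1). The point is that every member of $\CC$ is \emph{monotone}, operators and dual operators being isotone in each coordinate, so by \cite[Theorem 5.4]{gehr:boun01} the lifting $f\mapsto f\lo$ on maps between monotone algebras sends an injective (resp.\ surjective) $\Om$-homomorphism $f\colon\LL\to\M$ to an injective (resp.\ surjective) $\Om$-homomorphism $f\lo\colon\LL^\sg\to\M^\sg$; this yields (A1). Moreover $\CC$ is closed under $(-)^\sg$, since $f\lo$ is an operator whenever $f$ is one and $f\up$ is a dual operator whenever $f$ is one \cite[Lemma 4.6]{gehr:boun01}. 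Axiom (A2) is immediate from Theorem \ref{Fhom}, and (A4) is the lattice case of the isomorphism $(\prod_I\LL_i)^\sg\cong\prod_{U\in\beta I}(\prod_U\LL_i)^\sg$ established in \cite[Theorem 3.1]{gold:cano18}.

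The heart of the matter, and the step I expect to be the main obstacle, is axiom (A3): the existence, for a suitable ultrafilter $U$, of an $\Om$-monomorphism $(P^+_\Om)^\sg\mono(P^U)^+_\Om$. This is supplied by Theorem \ref{ephienlarge}, which concentrates the real work of the development. One first applies the MacNeille/canonical-extension relationship (Theorem \ref{LLsigmaembed}) to the monotone $\Om$-lattice $P^+_\Om$, obtaining an $\Om$-monomorphism $(P^+_\Om)^\sg\mono\ov{(P^+_\Om)^U}$ into the MacNeille completion of an ultrapower. The delicate part is then to identify this MacNeille completion with the target $(P^U)^+_\Om$: at the lattice level $\ov{(P^+)^U}$ may be taken to be $(P^U)^+$ \cite[Theorem 6.1]{gold:cano18}, and Lemma \ref{completeMac} shows that, under the completeness hypotheses, the MacNeille extensions $\ov{\f^{(P^+)^U}}$ and $\widehat{\f^{(P^+)^U}}$ of the operations coincide with the operations $\f^{(P^U)^+}$ read off the ambient stable set lattice. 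That lemma rests in turn on Theorem \ref{Fhom}, namely that $\thet$ is an $\Om$-homomorphism, together with the complete join/meet preservation of the definable operations.

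Once (A1)--(A4) are in place, the quoted \cite[Theorem 7.1]{gold:cano18} applies directly to any ultraproducts-closed $\cS\sub\Sig_\Phi$, showing that the subvariety of $\CC$ generated by $\{P^+_\Om:P\in\cS\}$ is closed under $(-)^\sg$, i.e.\ under canonical extensions. Apart from the input of Theorem \ref{ephienlarge}, everything here is assembly, so I would spend my effort making sure the four ingredients are matched correctly to the framework axioms and that the completeness hypotheses are exactly what Lemma \ref{completeMac} requires.
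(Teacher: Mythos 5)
Your proposal is correct and follows essentially the same route as the paper: it instantiates the canonicity framework with $\Sig_\Phi$, the variety $\CC$ of normal-operator/dual-operator $\Om$-lattices, the maps \eqref{Lsigom} and \eqref{Pptau}, verifies (A1)--(A4) by exactly the citations and results the paper uses (with Theorem \ref{ephienlarge} carrying the weight of (A3)), and then applies Theorem 7.1 of the cited earlier work.
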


In conclusion we observe that the definition of a canonicity framework emphasises the algebraic side of the duality between algebras $\A\in\CC$ and structures $P\in\Sig$. The axioms (A1)--(A4) all describe relationships between algebras, with the role of the ultraproducts-closed class $\Sig$ being largely to supply some of the algebras via the map 
$(-)^+\colon\Sig\to\CC$. What is missing is a map $(-)_+\colon\CC\to\Sig$ in the reverse direction, assigning to each algebra $\A$ a \emph{canonical structure} $\A_+\in\Sig$. Moreover, while  $\CC$  forms a category under the standard notion of homomorphism between algebras, we are missing a suitable notion of ``morphism'' between the structures in $\Sig$.
These desiderata are present in the Boolean case, where we have  $(\A_+)^+=\A^\sg$ and can show that for any structure $P$ there is an ultrapower $P^U$ that can be mapped by a \emph{bounded morphism} (a.k.a.\ p-morphism) onto 
$(P^+)_+$ \cite[Theorem 3.6.1]{gold:vari89}. This bounded morphism induces an embedding  in the reverse direction from
$((P^+)_+)^+$, which is  $(P^+)^\sg$, into $(P^U)^+$, thereby proving (A3).

Ultimately what we want in the non-distributive setting is to make $\Sig_\Phi$ into a category whose duality with $\CC$ is expressed by the existence of a pair of contravariant functors between them. That would allow a version of the Goldblatt-Thomason theorem to be formulated, giving structural conditions on a subclass of $\Sig_\Phi$ that characterise when that subclass is equal to $\{P:P^+\in\V\}$ for some subvariety $\V$ of $\CC$.
A functorial duality of this kind is developed in \cite{gold:morp19}, where it is shown that it provides two non-equivalent versions of the Goldblatt-Thomason theorem.


\bibliographystyle{plain}
\small


\end{document}